\newlength{\figwidth}
\newlength{\figheight}
\newlength{\philwidth}
\def\mbb#1{\mathbb{#1}}
\def\lrs#1{\ensuremath{\left[{#1}\right]}}%
\def\lrv#1{\ensuremath{\lvert{#1}\rvert}}%
\newcommand{\1}{{\rm 1\hspace*{-0.4ex}%
\rule{0.1ex}{1.52ex}\hspace*{0.2ex}}}
\newcommand{\R}{\ensuremath{\mathbb{R}}}
\newcommand{\NN}{{\mathbb N}}
\newcommand{\abs}[1]{\lrv{#1}}%
\def\Prblr#1{\mbb{P}\kern-2pt\lrs{{#1}}}%
\newcommand{\prob}[1]{{\mathbb P}\Big(#1\Big)}
\newcommand{\expect}[1]{{\mathbb E}\left(#1\right)}
\newcommand{\lc}{\left\{}
\newcommand{\rc}{\right\}}
\newcommand{\lb}{\left(}
\newcommand{\rb}{\right)}
\newcommand{\ls}{\left[}
\newcommand{\rs}{\right]}
\newcommand{\re}{{\mathbb R}}
\newcommand{\tdvarphi}{\tilde{\varphi}}
\newcommand{\tp}{\tilde{\psi}}
\newcommand{\Wc}{W^o}
\newtheorem{lemma}{Lemma}
\newtheorem{theorem}{Theorem}
\definecolor{darkorange}{rgb}{1.00,0.50,0.00}
\begin{document}
\title[Asymptotic Results for Random Polynomials on the Unit Circle]{Asymptotic Results for Random Polynomials on the Unit Circle}

\author{Gabriel H. Tucci and Philip A. Whiting}
\address{Gabriel H. Tucci and Philip A. Whiting are with Bell Labs, Alcatel--Lucent, 600 Mountain Ave, Murray Hill, NJ 07974.}
\email{gabriel.tucci@alcatel-lucent.com}
\email{philip.whiting@alcatel-lucent.com}

\begin{abstract}
In this paper we study the asymptotic behavior of the maximum magnitude of a complex random polynomial with i.i.d. uniformly distributed random roots
on the unit circle. More specifically, let $\{n_k\}_{k=1}^{\infty}$ be an infinite sequence of positive integers and let
$\{z_{k}\}_{k=1}^{\infty}$ be a sequence of i.i.d. uniform distributed random variables on the unit circle.
The above pair of sequences determine a sequence of random polynomials $P_{N}(z) = \prod_{k=1}^{N}{(z-z_k)^{n_k}}$ with random roots on the unit circle and their corresponding
multiplicities. In this work, we show that subject to a certain regularity condition on the sequence $\{n_k\}_{k=1}^{\infty}$, the
log maximum magnitude of these polynomials scales as $s_{N}I^{*}$ where $s_{N}^{2}=\sum_{k=1}^{N}{n_{k}^{2}}$ and $I^{*}$ is a strictly positive random variable.
\end{abstract}

\maketitle

\section{Introduction}\label{intro}

Random polynomials are ubiquitous in several areas of mathematics and have found several applications in diverse fields such as random matrix theory, representation theory and chaotic systems (see \cite{kac, berry, shepp, edelman, shme,bleher}). The geometric structure of random polynomials is of significant interest as well. Constructing a random polynomial from its roots is a natural construction which can be expected to occur in a wide range of settings. For instance, consideration of the asymptotic behavior of the maximum magnitude allowed the authors to obtain a lower bound on the minimum singular value for random Vandermonde matrices (see \cite{JOTP} for more details). 

In this work we continue the investigation of such polynomials, where we allow for
non--constant multiplicity of the roots. We show that providing this sequence satisfies a simple
sufficient condition, the limit distribution of the maximum magnitude on
the unit circle is determined by a certain Gaussian process obtained from the Brownian bridge. Surprisingly this limit, up to renormalization, does not depend on the 
sequence itself. 

Our construction for these random polynomials is as follows. Let $\{n_k\}_{k=1}^{\infty}$ be an infinite sequence of positive integers and let
$\{z_{k}\}_{k=1}^{\infty}$ be sequence of i.i.d. uniform distributed unit magnitude complex numbers.
The above pair of sequences, then determine a sequence of random polynomials $P_{N}(z) = \prod_{k=1}^{N}{(z-z_k)^{n_k}}$ with roots on the unit circle and their corresponding
exponents. Our main result relies on a construction based on the Brownian bridge and enables us to conclude that  
\begin{equation}
\frac{1}{s_N} \log \max \Big\{ |P_N(z)|^2\,\,:\,\,|z|=1\Big\} \Rightarrow I^{*}  
\end{equation}
converges weakly to a positive random variable $I^{*}$ where $s_{N}^2=\sum_{k=1}^{N}{n_k^2}$.
 
\section{Random Polynomials}\label{sec_randpoly}

\subsection{Pointwise Convergence and Lindberg Condition}\label{sec_prelim}

Let $P_{N}$ be as before and let $L_{N}(\psi)$ be defined as 
$$
L_N(\psi) := \log |P_{N}(e^{i\psi})|^{2} = \sum_{k=1}^N n_k \log \Big(2 \lb 1 - \cos ( \psi - \theta_k) \rb \Big)
$$
for $\psi\in[0,2\pi]$.  Also let $s_N^2 := \sum_{k=1}^{N} n_k^2$ and $T_N(\psi) := L_N(\psi)/s_N$.

As explained in the introduction we are interested in the behaviour of the maximum magnitude squared of $P_N(z)$ as $N$ increases. Since $\abs{P_N(z) }^2$ is a continuous
function on the  unit circle, it follows that there exists $\varphi^*$ that attains its maximum. Let $T_{N}^{*}$ be this value. For later use, let 
$\Phi:=\{\varphi_r\,\,:\,\,r\geq 0\}$ be the set of $2\pi$ times the dyadic rationals on the interval $[0,1]$. Then it is clear that
$$
\limsup_{r \rightarrow \infty} {\,T_N(\varphi_r)} = T_N^*.
$$
The case $n_k=1$ appears in connection with the asymptotic behaviour of the minimum eigenvalue of random Vandermonde matrices (see 
\cite{TW, JOTP} for more details). In this special case, weak convergence to the normal distribution holds
$$
T_N(\psi)  \Rightarrow N(0,\sigma^2)
$$
for every fixed $\psi$ where 
$$
\sigma^2 := \frac{1}{2\pi}\int_0^{2\pi} \log^2 \big(2 ( 1 - \cos \psi)\big) d\psi \approx  3.292.
$$
This is in fact a consequence of the central limit theorem since 
$\log \big(2 ( 1 - \cos \psi)\big)$ is square summable and $\int_0^{2\pi} \log \big(2 ( 1 - \cos \psi)\big) d\psi = 0$.

In what follows we derive a simple sufficient condition for the asymptotic normality of the random variable $T_N$. We use the notation $\expect{X;A} := \expect{X \1_A}$ where $X$ is a random variable and $A$ is a Borel set. Let $X_k$ be a sequence of independent zero mean and variance $\sigma^2_k$ random variables. We say that this sequence satisfies the Lindberg condition \cite{Feller_Vol2} if and only if
\begin{equation}
\lim_{N\to\infty} \frac{1}{s_N^2} \sum_{k=1}^N \expect{ X_k^2\,;\, \abs{ X_k} \geq \epsilon s_N} = 0
\end{equation}
for every $\epsilon > 0$. If $X_k = \sigma_k Y_k$ then this condition becomes
\begin{equation}
\lim_{N \rightarrow \infty}  \frac{1}{s_N^2} \sum_{k=1}^N \sigma^2_k \expect{ Y^2 \,;\, \abs{ Y} \geq \frac{\epsilon s_N}{\sigma_k}} = 0.
\label{eqn_easyLind}
\end{equation}

For our purposes we only focus on the case where the random variables $Y_{k}$ are i.i.d. according to the distribution of $Y = \log \big(2 (1- \cos (2\pi U))\big)$ and where $U$ is uniform random variable on $[0,1]$. The main result of this section is the following Theorem.

\begin{theorem}[Lindberg Exponent]
\label{thm_Lindberg}
If 
\begin{equation}
\lim_{N\to \infty} \sum_{k=1}^N e^{-\frac{\epsilon s_N}{n_k}} = 0
\label{eqn_Suff}
\end{equation}
holds for every $\epsilon > 0$ then  
\begin{equation}
T_{N}(\psi)=\frac{L_N(\psi)}{s_N} \Rightarrow N(0,I_\sigma).
\end{equation}
\end{theorem}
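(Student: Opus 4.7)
The plan is to apply the Lindeberg–Feller central limit theorem to the triangular array $X_k := n_k Y_k$, where $Y_k := \log\bigl(2(1-\cos(\psi-\theta_k))\bigr)$. Because the $\theta_k$ are i.i.d.\ uniform on $[0,2\pi]$, the $Y_k$ are i.i.d.\ copies of $Y = \log\bigl(2(1-\cos(2\pi U))\bigr)$, which as noted above has mean zero and variance $I_\sigma$. Hence $L_N(\psi) = \sum_k n_k Y_k$ has total variance $I_\sigma\, s_N^2$, and once Lindeberg's condition in the form \eqref{eqn_easyLind} is verified for this array the theorem will deliver $L_N(\psi)/s_N \Rightarrow N(0, I_\sigma)$.

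The first ingredient is a sharp tail bound for $Y$. Since $|\sin|\leq 1$ we have $Y\leq 2\log 2$ deterministically, so only the lower tail matters. Writing $2(1-\cos(2\pi U)) = 4\sin^2(\pi U)$ and using the convexity of $\arcsin$ on $[0,1]$ (which gives $\arcsin(x)\leq \pi x/2$), a direct computation shows
\begin{equation*}
\Prb{Y \leq -t}\;=\;\Prb{|\sin(\pi U)|\leq e^{-t/2}/2}\;\leq\;\tfrac{1}{2}\,e^{-t/2}\qquad(t\geq 0).
\end{equation*}
Integration by parts, combined with the finiteness of $\Exp{Y^2}=I_\sigma$, then yields a uniform estimate of the form
\begin{equation*}
\expect{Y^2\,;\,|Y|\geq T}\;\leq\;C(T^2+1)\,e^{-T/2}\qquad(T\geq 0)
\end{equation*}
for some absolute constant $C$.

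To verify \eqref{eqn_easyLind}, I substitute $T=\epsilon s_N/n_k$ into the above bound and sum:
\begin{equation*}
\frac{1}{s_N^2}\sum_{k=1}^{N} n_k^2\,\expect{Y^2\,;\,|Y|\geq \epsilon s_N/n_k}\;\leq\;C\epsilon^2\sum_{k=1}^{N}e^{-\epsilon s_N/(2 n_k)}\;+\;\frac{C}{s_N^2}\sum_{k=1}^{N} n_k^2\,e^{-\epsilon s_N/(2 n_k)}.
\end{equation*}
Since $n_k^2 \leq s_N^2$ termwise, the second sum is dominated by the first, and both tend to zero by applying the hypothesis \eqref{eqn_Suff} with $\epsilon/2$ in place of $\epsilon$. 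The Lindeberg–Feller theorem then finishes the proof.

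The only genuinely delicate point is the exponential decay rate $e^{-T/2}$ in the tail bound: a polynomial tail, which is all the finiteness of $\Exp{Y^2}$ would provide for free, would not reduce the Lindeberg sum to the hypothesized sum \eqref{eqn_Suff}. Fortunately the merely logarithmic singularity of $\log(2(1-\cos\theta))$ at $\theta=0$ makes this estimate elementary.
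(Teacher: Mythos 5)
Your proposal is correct and follows essentially the same route as the paper: both verify the Lindeberg condition for the array $n_k Y_k$ by exploiting the exponentially small lower tail of $Y$ coming from the logarithmic singularity of $\log\bigl(2(1-\cos\theta)\bigr)$, arriving at sums of the form $\sum_k e^{-\epsilon s_N/(2n_k)}$ (plus polynomial factors absorbed via $n_k\leq s_N$) and invoking the hypothesis with $\epsilon/2$. The only difference is cosmetic: the paper treats the upper tail by Markov's inequality on the moment generating function and the lower tail by explicitly integrating $\log^2$ near the singularity, whereas you package both into the single tail bound $\Prb{Y\leq -t}\leq \tfrac12 e^{-t/2}$ followed by integration by parts.
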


\begin{proof}
By definition  $Y = \log  2 (1- \cos 2\pi U)$, where $U$ is uniform on $[0,1]$,
from which it follows that $Y \leq \log 2(1 -\cos \pi) = \log 4$ and hence the
moment generating function exists for all $t > 0$. 

Additionally, since $s_N,n_k$ and $\epsilon$ are positive it follows that,
\begin{equation}
\expect{Y^2 ; Y \geq \frac{s_N\epsilon}{n_k}} \leq \log^{2}(4) \prob{Y \geq \frac{s_N\epsilon}{n_k}} 
\label{eqn_posY}
\end{equation}
and applying Markov's inequality with $t > 0$ we obtain 
\begin{equation}
\prob{Y \geq \frac{s_N\epsilon}{n_k}} \leq 4^t e^{-\frac{t s_N\epsilon}{n_k}}
\label{eqn_Yright}
\end{equation}
We thus obtain the following bound for the upper Lindberg condition,
\begin{eqnarray*}
\frac{1}{s^2_N} \sum_{k=1}^N n_k^2 \expect{Y^2 ; Y \geq \frac{s_N\epsilon}{n_k}}
& \leq & \frac{ \log^{2}(4)  4^t }{s^2_N} \sum_{k=1}^N n_k^2 e^{-\frac{t s_N\epsilon}{n_k}}\\
& \leq & \log^{2}(4) 4^t  \sum_{k=1}^N e^{-\frac{t s_N\epsilon}{n_k}}.
\end{eqnarray*}
The sum on the RHS goes to 0 as this is the condition we assumed holds and because $t$ and $\epsilon$ are arbitrary and fixed. We now turn to the lower Lindberg condition, where $Y \leq  -\frac{s_N\epsilon}{n_k}$. 
Let $\xi_{N,k} := \epsilon s_N/n_k$, and $\theta$ in the interval $[-\pi/3,\pi/3]$. Using the following two basic inequalities, 
\begin{equation}
\label{eqn_cossquare}
\cos \theta \leq 1 - \theta^2/4
\end{equation}
for $0 \leq \abs{\theta} \leq \pi/2$ and, 
\begin{equation}
\log^{2} \big(2 \lb 1 - \cos \theta \rb\big) \leq \log^{2} \big(\theta^2/2\big)
\end{equation}
for $0 \leq \abs{\theta} \leq \pi/3$ we obtain that,
\begin{eqnarray*}
\expect{Y^2; Y \leq -\xi_{N,k} }
& \leq & \expect{ \log^{2} \big(\theta^2/2\big)\,; Y \leq -\xi_{N,k} } \\
& \leq & \expect{ \log^{2} \big(\theta^2/2\big)\,; E_{N,k} }
\end{eqnarray*} 
where $E_{N,k} = \lc \abs{\theta} \leq \sqrt{2} e^{-\xi_{N,k}/2} \rc$. 
Since   $\int \log^{2}(x) dx = x \log^{2}(x)  - 2x \log(x) + 2x$, we make the substitution
$\phi = \theta/\sqrt{2}$ and determine the above expectation to be
\begin{eqnarray*}
\expect{\log^{2}(\phi^2) ; \abs{\phi} \leq \delta} &= &\frac{8}{\pi\sqrt{2}} h(\delta)\\
\end{eqnarray*}
where $h(\delta) := \delta\log^{2}(\delta) - 2 \delta \log(\delta) + 2\delta $ 
and $\delta := e^{-\xi_{N,k}}$. Rewriting this expression we obtain 
$$
\frac{4\sqrt{2}}{\pi}e^{-\epsilon s_N/(2n_k)} \lb \frac{\epsilon^2 s_N^2}{4n^2_k}
+ \frac{\epsilon s_N}{n_k} + 2 \rb.
$$
Summing over $k$ and dividing by $s_N^2$ we obtain that 
$$
 \frac{4\sqrt{2}}{\pi}\sum_{k=1}^N e^{-\epsilon s_N/(2n_k)} \left(
 \frac{\epsilon^2}{4} + \frac{\epsilon n_k}{s_N} + 2\frac{n^2_k}{s^2_N} \right) \rightarrow 0.
$$
Now applying the classical Lindberg's Theorem \cite{Billingsley} the proof is complete.
\end{proof}

The condition is easily verified to hold when $n_k = k^{p}$ for $p\geq 0$. Similarly, it is easy to show that this condition fails if 
$n_k = 2^k$.

\subsection{Finite Dimensional Limits}
The result in Section \ref{sec_prelim} was for the marginal distribution of $T_N(\psi)$ for a fixed value $\psi$. However, we would like to consider the weak limit for the sequence $T_{N,r}$ where $T_{N,r} := T_{N}(\phi_r)$. It is well known that weak convergence in the sequence space $\R^\infty$ is entailed by weak 
convergence of the finite dimensional distributions. For this reason, it is important to understand the joint distribution of $s$ such variables. We first focus on the case $s=2$. Define the covariance function
\begin{equation}
K(\theta) := (2\pi)^{-1} \int_0^{2\pi}\log \big(2\lb 1 - \cos \psi \rb\big) \log \big(2 \lb 1 - \cos \lb \psi + \theta \rb \rb \big) d\psi 
\end{equation}
for $\theta\in [0,2\pi]$. The plot of this function is shown in Figure \ref{fig_Kfun}. 
\begin{figure}[!Ht]\label{fig_Kfun}
  \begin{center}
    \includegraphics[width=10cm]{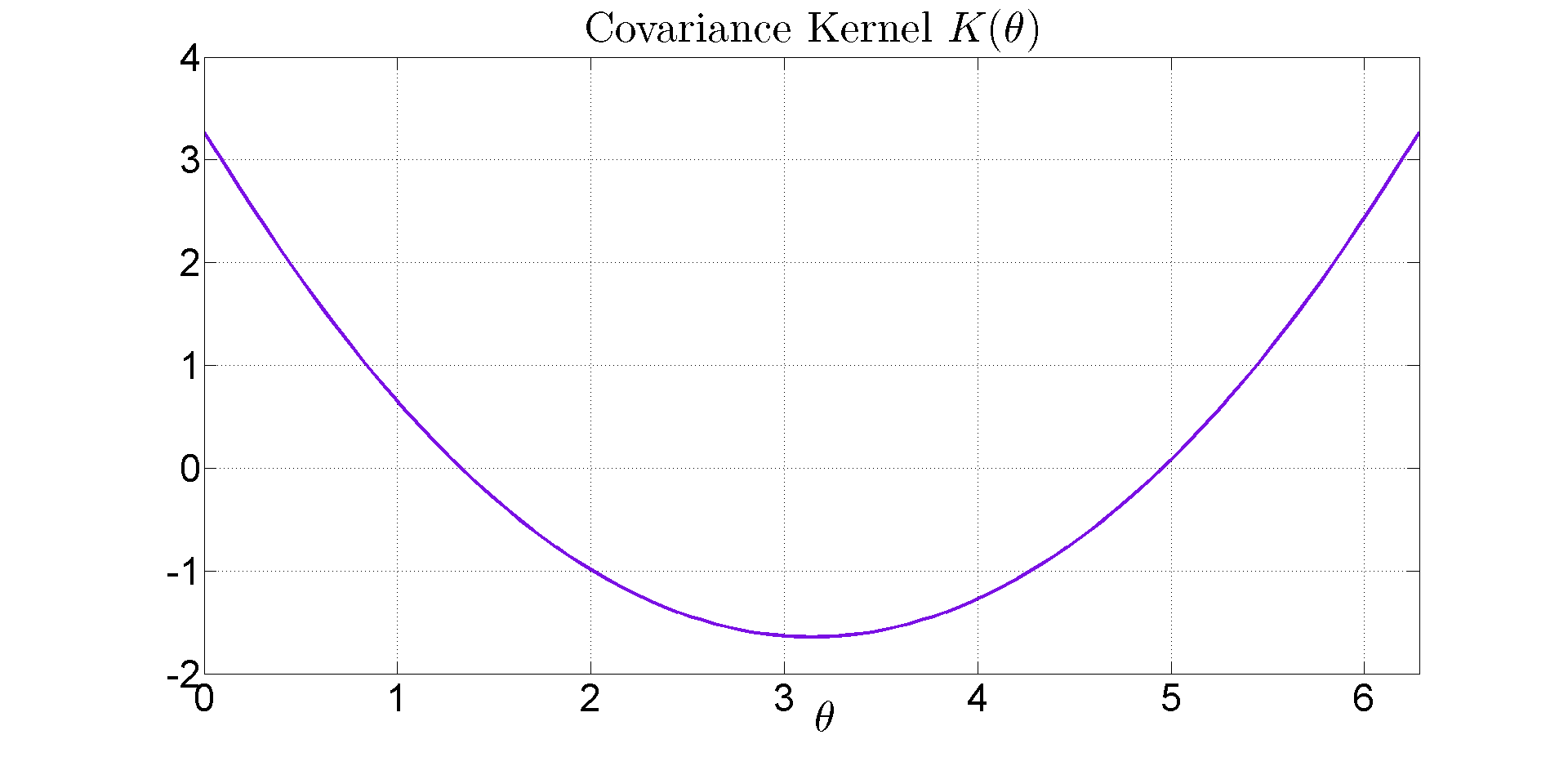}
    \caption{Graph of the covariance function $K(\theta)$.}
  \end{center}
\end{figure}
Given $t$ and $u \in \R$ define,
\begin{eqnarray}
V_N & := &  t T_N(\varphi_1) + u T_N(\varphi_2)  \label{eqn_Veqn} \\
     & = & \frac{t}{s_N} \sum_{\ell=1}^N n_{\ell}\log \abs{e^{i\varphi_1} - e^{i\theta_\ell}}^2 + \frac{u}{s_N}\sum_{\ell=1}^N n_{\ell}\log \abs{e^{i\varphi_1}-e^{i\theta_\ell}}^2 \nonumber \\
     & = & \frac{1}{s_N} \sum_{\ell=1}^N V_{\ell,N} \nonumber    
\end{eqnarray}    
which is a scaled sum of zero mean i.i.d. random variables with variance,
$$
\expect{V_{\ell,N}^2} = n_{\ell}\Big(\lb t^2 + u^2 \rb \sigma^2+ 2tu K(\abs{\varphi_1-\varphi_2})\Big).
$$
In what follows we denote denote $V = t W + u Z$ where $W = T_N(\varphi_1)$ and  $Z = T_N(\varphi_2)$. We assume
that $t \neq 0$ and $u \neq 0$ since otherwise there is nothing to show. It is not difficult to see that 
\begin{eqnarray*}
V^2 & =  & t^2W^2 + 2tu WZ + u^2Z^2 \\
  & \leq & t^2 W^2 + \abs{tu}(W^2 + Z^2) + u^2 Z^2. \\
\end{eqnarray*}
However, the triangle inequality implies that if $\abs{V} \geq \epsilon s_N/n_k$ then either, $\abs{W} \geq  \epsilon s_N/(2\abs{t} n_k)$
or the corresponding inequality for $Z$ holds (or both). However, we have 
already demonstrated that
$$
\frac{1}{s_N^2}\sum_{k=1}^N n_k^2\expect{W^2 ; \abs{W} \geq  \frac{\epsilon s_N}{2\abs{t} n_k}} \rightarrow 0
$$
as $N \rightarrow\infty$. As far as the terms involving $Z^2$ are concerned we
only need to show the above in the case that,
$$
\abs{Z} \leq \frac{\epsilon s_N}{2\abs{u} n_k}
$$
that is we wish to show,
$$
\frac{1}{s_N^2}\sum_{k=1}^N n_k^2\expect{Z^2 ; \abs{W} \geq  \frac{\epsilon s_N}{2\abs{t} n_k}, \abs{Z} \leq  \frac{\epsilon s_N}{2\abs{u} n_k}} \rightarrow 0
$$
but the above is smaller than
\begin{equation}
\frac{\epsilon^2}{4 u^2} \sum_{k=1}^N \prob{\abs{W} \geq  \frac{\epsilon s_N}{2\abs{t} n_k}}.
\label{eqn_finsum}
\end{equation}
Using Markov's inequality as in (\ref{eqn_Yright}) we see that
$$
\prob{ W \geq  \frac{\epsilon s_N}{2\abs{t} n_k}} \leq 4^\eta e^{-\eta \frac{\epsilon s_N}{2\abs{t} n_k}}.
$$
Applying (\ref{eqn_cossquare}) and using that $\theta \sim U[0,2\pi]$ we see that
$$
\prob{ W \leq -\frac{\epsilon s_N}{2\abs{t} n_k}} \leq \frac{2}{\pi} e^{-\frac{\epsilon s_N}{4\abs{t} n_k}}.
$$
It follows that (\ref{eqn_finsum})  tends to 0 as $N \rightarrow \infty$.

Thus by an extension of the arguments given in the proof of Theorem \ref{thm_Lindberg} it can be shown that condition (\ref{eqn_Suff}) is sufficient for the Lindberg condition to hold in respect of the
random variables $V_{N,\ell}$. Therefore, $\lb T_N(\varphi_1), T_N(\varphi_2)\rb \Rightarrow N(0,\Sigma)$ with $\Sigma_{11} = \Sigma_{22} = \sigma^2$ and $\Sigma_{12} = \Sigma_{21} = K(\abs{\varphi_1-\varphi_2})$ as an application of the Cramer--Wold device \cite{Billingsley68}.

Clearly the above arguments go through in the case of 3 or more variables. Therefore, the following result holds.
\begin{theorem}
\label{thm_Kvariate}
Let $(\varphi_1,\ldots,\varphi_s)$ be $s$ numbers in $[0,2\pi]$ and let $\lb T_N(\varphi_1),\ldots,T_N(\varphi_s) \rb$ be the corresponding random
vector. Then,
$$
\lb T_N(\varphi_1),\ldots,T_N(\varphi_s) \rb  \Rightarrow N(0, \Sigma_s)
$$
i.e. asymptotically joint normal with covariance determined by 
$\Sigma_{k,\ell} = K(|\varphi_k - \varphi_\ell|)$ where 
$$
K(\theta) := (2\pi)^{-1} \int_0^{2\pi}\log \big(2\lb 1 - \cos \psi \rb\big) \log \big(2 \lb 1 - \cos \lb \psi + \theta \rb \rb\big) d\psi.
$$  
\end{theorem}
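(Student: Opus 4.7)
My plan is to follow the route signalled by the authors: reduce the $s$-variate statement to a one-dimensional CLT via the Cramer-Wold device, then verify the Lindberg condition for the resulting scalar array by the same mechanism already used in the $s=2$ case preceding the theorem. Concretely, by Cramer-Wold it suffices to show that for every $(t_1,\ldots,t_s)\in\R^s$,
\begin{equation*}
V_N := \sum_{j=1}^{s} t_j\, T_N(\varphi_j) = \frac{1}{s_N}\sum_{\ell=1}^{N} V_{\ell,N},
\qquad V_{\ell,N} := n_{\ell}\sum_{j=1}^{s} t_j \log\abs{e^{i\varphi_j}-e^{i\theta_\ell}}^{2},
\end{equation*}
converges weakly to $N\!\lb 0,\, t^{\top}\Sigma_s t\rb$, where $\Sigma_s$ is the covariance matrix in the statement.

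The variance computation comes first. Each summand $V_{\ell,N}$ has mean zero, since a single $\log\abs{e^{i\varphi}-e^{i\theta_\ell}}^{2}$ integrates to $0$ over $\theta_\ell\sim U[0,2\pi]$. By a shift $\psi = \theta_\ell - \varphi_k$ one identifies the cross moment
\begin{equation*}
\expect{\log\abs{e^{i\varphi_j}-e^{i\theta_\ell}}^{2}\log\abs{e^{i\varphi_k}-e^{i\theta_\ell}}^{2}}=K\lb\abs{\varphi_j-\varphi_k}\rb,
\end{equation*}
so that $\Var{V_{\ell,N}} = n_{\ell}^{2}\sum_{j,k} t_j t_k K(\abs{\varphi_j-\varphi_k})$ and $\Var{V_N}=t^{\top}\Sigma_s t$ independently of $N$. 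This identifies the right limiting variance and gives the claimed covariance structure.

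Next I would verify Lindberg for the triangular array $\{V_{\ell,N}/s_N\}$. On the event $\abs{V_{\ell,N}}\geq \epsilon s_N$, the triangle inequality forces at least one index $j$ with $\babs{\log\abs{e^{i\varphi_j}-e^{i\theta_\ell}}^{2}}\geq \epsilon s_N/(s\abs{t_j}n_{\ell})$; combined with $(a_1+\cdots+a_s)^{2}\leq s(a_1^{2}+\cdots+a_s^{2})$, this bounds $\expect{V_{\ell,N}^{2};\,\abs{V_{\ell,N}}\geq \epsilon s_N}$ by a sum of $s$ one-variable tail expectations of exactly the form already controlled in Theorem~\ref{thm_Lindberg} and in the $s=2$ argument. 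The upper tails are handled by the Markov/MGF estimate (\ref{eqn_Yright}), and the lower tails by the inequality (\ref{eqn_cossquare}), after which condition (\ref{eqn_Suff}) dominates everything by the geometric sum $\sum_k e^{-\tilde\epsilon s_N/n_k}\to 0$. Applying the classical Lindberg-Feller CLT then yields the scalar convergence, and Cramer-Wold closes the proof. The only real obstacle is the bookkeeping in the Lindberg step, namely writing the tail event as a union over which of the $s$ coordinates is the "large" one and summing the resulting $s$ copies of the $s=2$ estimate; no new analytic ingredient is required.
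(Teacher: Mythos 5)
Your proposal is correct and follows essentially the same route as the paper: reduce to a scalar limit via the Cramer--Wold device, compute the variance of the linear combination to identify $t^{\top}\Sigma_s t$ with $\Sigma_{k,\ell}=K(|\varphi_k-\varphi_\ell|)$, and verify the Lindberg condition by splitting the tail event over which coordinate is large, using the Markov/moment-generating-function bound for the upper tail and the cosine inequality for the lower tail, all under condition (\ref{eqn_Suff}). The paper carries this out explicitly for $s=2$ (including the cross-term case where the other coordinate is small) and notes the extension to general $s$, which is exactly the bookkeeping you describe.
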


\section{Asymptotic Distribution of $T_N^*$}

The following is an alternative way to construct the limit distribution of the random sequence
$T_{N,r}$. Given a realization of the Brownian bridge $\Wc$ on $[0,2\pi]$ (which satisfies
$\Wc(0) = \Wc(2\pi) = 0$). A $\varphi$ shift of the Brownian bridge is defined as
\begin{equation*}
\Wc_\varphi(\theta) :=
\begin{cases}
\Wc(\varphi+\theta) - \Wc(\varphi) & \theta \in [0,2\pi -\varphi],\\
\Wc(\varphi+\theta-2\pi) - \Wc(\varphi) & \theta \in [2\pi - \varphi,2 \pi]. 
\end{cases}
\end{equation*}
In addition, define the function $I:[0,2\pi] \rightarrow \R$ by
$$
I_\varphi :=  \int_0^{2\pi} \Wc_\varphi(\theta) \frac{\sin \theta}{1 - \cos \theta} d \theta
$$
for $\varphi\in[0,2\pi]$. Figure 2 shows us a realization of $I_\varphi$.
\begin{figure}[!Ht]\label{fig_itt}
  \begin{center}
    \includegraphics[width=12cm]{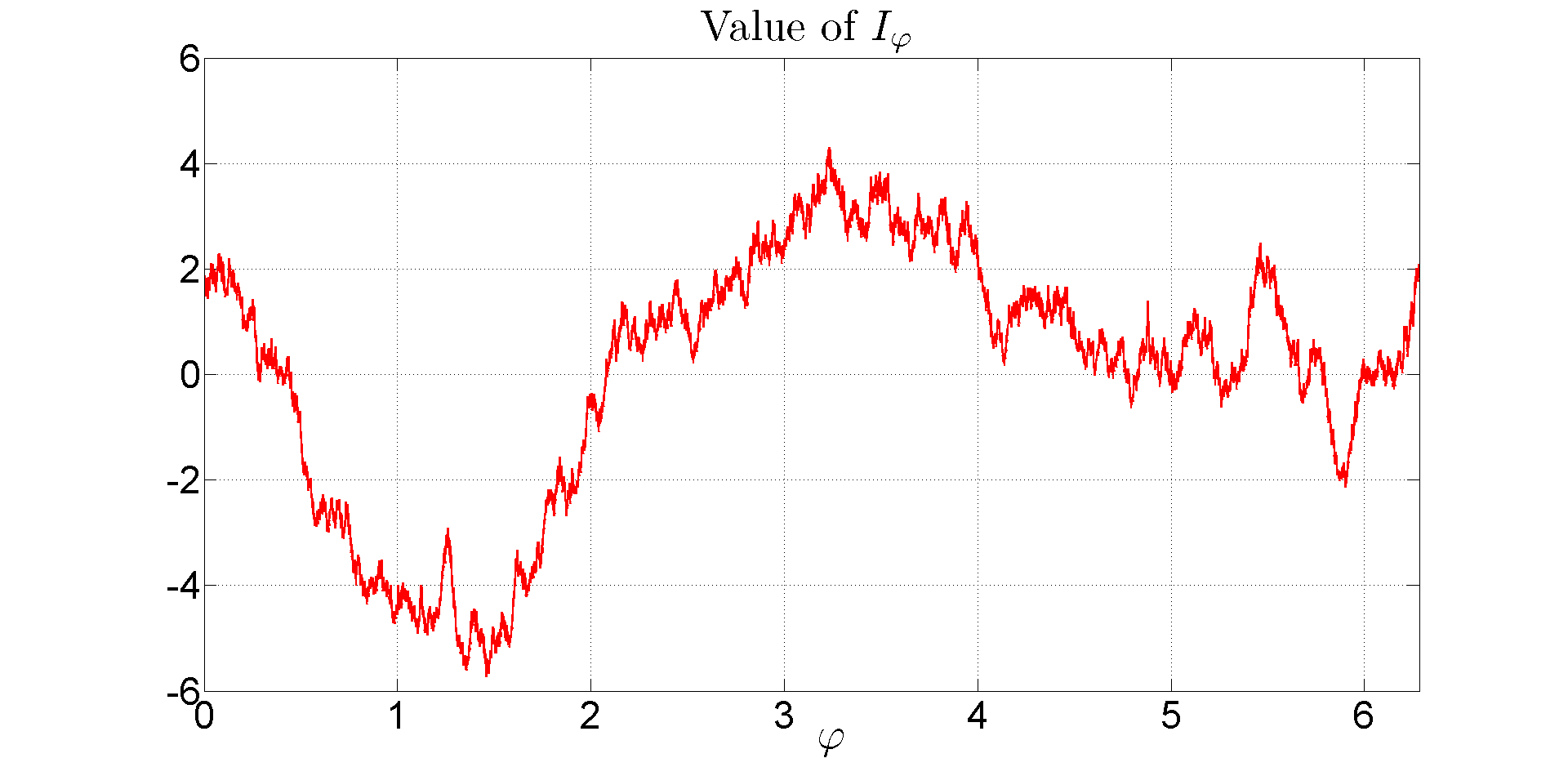}
    \caption{$I_\varphi$ for a realization of the Brownian bridge.}
  \end{center}
\end{figure}
The following Lemma shows that $I$ is a well defined quantity almost surely. 

\begin{lemma}\label{lemma_ctsI}
Given a realization of the Brownian bridge $\Wc$, then a.s. the following integral exists for all $\varphi \in [0, 2\pi)$
$$
|I_\varphi|=\Bigg|\int_0^{2\pi} \Wc_\varphi \frac{\sin \psi}{1 - \cos \psi} d \psi \Bigg| < \infty.
$$
In addition, the function $\varphi\mapsto I_\varphi$ is continuous. 
\end{lemma}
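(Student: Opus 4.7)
The plan is to exploit the fact that the kernel $k(\theta) := \sin\theta/(1-\cos\theta) = \cot(\theta/2)$ has only two singularities, at $\theta = 0$ and $\theta = 2\pi$, with $|k(\theta)| \sim 2/\theta$ near $0$ and $|k(\theta)| \sim 2/(2\pi-\theta)$ near $2\pi$. Meanwhile, the shifted Brownian bridge $W^o_\varphi$ vanishes at both endpoints: at $\theta = 0$ by the first case of its definition, and at $\theta = 2\pi$ because the second case gives $W^o_\varphi(2\pi) = W^o(\varphi) - W^o(\varphi) = 0$. So the singularities are tempered by the vanishing of $W^o_\varphi$, and the whole question reduces to quantifying the rate of vanishing.

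First I would check that $W^o_\varphi$ is a.s.\ continuous on $[0,2\pi]$: the two branches of its definition agree at the split point $\theta = 2\pi - \varphi$ because $W^o(0) = W^o(2\pi) = 0$, so $W^o_\varphi$ inherits continuity from $W^o$. Next, fix any $\alpha \in (0, 1/2)$. By the standard Hölder-continuity result for the Brownian bridge, a.s.\ there exists a (random) constant $C = C(\omega) < \infty$ with $|W^o(s) - W^o(t)| \leq C|s-t|^\alpha$ for all $s,t \in [0,2\pi]$. Inspection of the two cases in the definition of $W^o_\varphi$ then yields, for \emph{every} $\varphi$, the uniform estimate
$$
|W^o_\varphi(\theta)| \leq C \min(\theta,\, 2\pi-\theta)^\alpha, \qquad \theta\in[0,2\pi],
$$
with the same constant $C$. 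This is the crux of the argument.

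For existence of $I_\varphi$, combining this bound with $|k(\theta)| = O(\theta^{-1})$ near $0$ and $O((2\pi-\theta)^{-1})$ near $2\pi$ shows that the integrand is dominated by $C \cdot \min(\theta,2\pi-\theta)^{\alpha-1}\cdot(\text{bounded factor})$, which is integrable since $\alpha > 0$. This establishes $|I_\varphi|<\infty$ for every $\varphi$ simultaneously on a single full-measure event.

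For continuity of $\varphi\mapsto I_\varphi$, I would first verify pointwise-in-$\theta$ continuity of $\varphi \mapsto W^o_\varphi(\theta)\,k(\theta)$: for any sequence $\varphi_n \to \varphi$, either eventually $\varphi_n$ lies on one side of $2\pi-\theta$ (and the conclusion is immediate from continuity of $W^o$), or it switches sides, in which case both pieces of the definition converge to the same limit using $W^o(0) = W^o(2\pi) = 0$. The dominated convergence theorem with the $\varphi$-independent bound above then gives $I_{\varphi_n} \to I_\varphi$. The main obstacle is really obtaining the uniform Hölder bound with a constant independent of $\varphi$; once that is in place, both parts of the lemma reduce to routine integrability arguments.
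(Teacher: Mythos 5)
Your argument is correct, and it reaches both conclusions by a somewhat different route than the paper. The paper also controls the endpoint singularities of $\cot(\psi/2)=\sin\psi/(1-\cos\psi)$ through the regularity of the bridge, but it does so via the L\'evy global modulus $w(\delta)=\sqrt{2\delta\log(1/\delta)}$: it splits the integral at a random $\delta_2$, estimates the two tail integrals explicitly (getting a bound of order $\delta_2^{1/3}$), and proves continuity in $\varphi$ quantitatively, bounding $\sup_\psi\lvert \Wc_\varphi(\psi)-\Wc_{\tdvarphi}(\psi)\rvert$ by the modulus when $\lvert\varphi-\tdvarphi\rvert<\delta$ and multiplying by the logarithmically divergent integral $\int_\delta^\pi \frac{\sin\psi}{1-\cos\psi}\,d\psi$, then letting $\delta\to0$. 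You instead use a.s.\ H\"older continuity of the bridge with exponent $\alpha\in(0,1/2)$ to manufacture a single $\varphi$-independent integrable dominating function $\mathrm{const}\cdot C\min(\theta,2\pi-\theta)^{\alpha-1}$, which gives existence for all $\varphi$ simultaneously and reduces continuity to pointwise continuity of $\varphi\mapsto \Wc_\varphi(\theta)$ plus dominated convergence; this is a softer and arguably cleaner argument. What the paper's sharper L\'evy-modulus bookkeeping buys is a quantitative rate that is reused later (in the proof of the weak-convergence theorem the estimate $O(\log\epsilon\, a(\epsilon))\to0$ is invoked), whereas your DCT argument yields only qualitative continuity. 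Two small points to tidy: your uniform bound should read $\lvert \Wc_\varphi(\theta)\rvert\le 2C\min(\theta,2\pi-\theta)^{\alpha}$ (one of the two branch estimates passes through the endpoints $\Wc(0)=\Wc(2\pi)=0$ and costs a triangle inequality, so the constant doubles — harmless, but it is not literally ``the same constant $C$''), and for the domination you should note explicitly that $\lvert\cot(\theta/2)\rvert\min(\theta,2\pi-\theta)$ is bounded on $(0,2\pi)$, which is what makes your dominating function integrable.
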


\begin{proof}
When $\varphi=0$ we write the above integral as $I$. The Levy global modulus of continuity tells us that for standard Brownian motion $B$ on $[0,2\pi)$
$$
\lim_{\delta\to 0} \limsup_{0 \leq t \leq 2 \pi - \delta} \frac{\abs{B(t+\delta) - B(t)}}{w(\delta)} = 1
$$
where $w(\delta) = \sqrt{2 \delta \log \frac{1}{\delta}}$ (see \cite{RogersWilliams} for a proof of this result). Since $\Wc$ is by definition,
$$
\Wc(\psi) = B(\psi) - \frac{\psi}{2 \pi} B(2 \pi)
$$
our argument is the same no matter which value of $\varphi$ is chosen because the Levy modulus applies to the entire sample path. We therefore set $\varphi = 0$. By definition of the Levy modulus, there exists $\delta_2 > 0$ almost surely  such that
$$
\frac{\abs{B(t+\delta) - B(t)}}{w(\delta)} \leq 2
$$
for all $0<\delta\leq\delta_2$.
Therefore,
\begin{equation}
a(\delta):=\abs{\Wc(\psi+\delta) - \Wc(\psi)} \leq 2 w(\delta) + \frac{|B(2\pi)|}{2\pi} \delta.
\end{equation}
We may therefore split the integral as,
\begin{equation}
I  =  \int_{\delta_2}^{2\pi- \delta_2} \Wc(\psi) \frac{\sin \psi}{1 - \cos \psi} d \psi + \int_0^{\delta_2} \Wc(\psi)\frac{\sin \psi}{1 - \cos \psi} d \psi
 + \int_{2 \pi - \delta_2}^{2\pi} \Wc(\psi) \frac{\sin \psi}{1 - \cos \psi} d \psi.
\end{equation}
The first integral is finite being the integral of a continuous function over the interval $[\delta_2, 2\pi - \delta_2]$. We may further suppose that $\delta_2$ has been chosen so that 
$\abs{\psi \frac{\sin \psi}{1 - \cos \psi}} \leq 4$ for $0 < \psi < \delta_2$ with the corresponding inequality in a similar neighbourhood of $2 \pi$. By choice of $\delta_2$ we obtain that  
$$
\Big| \int_0^{\delta_2} \Wc(\psi) \frac{\sin \psi}{1 - \cos \psi} d \psi \Big|
\leq  8 \int_0^{\delta_2} \frac{a(\psi)}{\psi} d \psi = O(\delta_2^{1/3})
$$ 
for sufficiently small $\delta_2$. The same argument applies to the last integral. Since $w(\delta_2)$ gives
a uniform bound the result holds for all $\varphi \in [0, 2 \pi)$. Continuity in $\varphi$ follows by a similar argument, 
\begin{eqnarray}
\vert I_\varphi - I_{\tdvarphi} \vert & \leq & \Big| \int_\delta^{2\pi- \delta} \lb \Wc_\varphi - \Wc_{\tdvarphi} \rb
\frac{\sin \psi}{1 - \cos \psi} d \psi \Big| \\
& + & \int_0^{\delta} \abs{\Wc_\varphi(\psi)}  \Big|\frac{\sin \psi}{1 - \cos \psi}\Big| d \psi \nonumber \\
 & + & \int_{2\pi - \delta}^{2\pi} \abs{\Wc_\varphi(\psi)} \Big|\frac{\sin \psi}{1 - \cos \psi}\Big| d\psi \nonumber \\
 & + & \int_0^{\delta} \abs{\Wc_{\tdvarphi}(\psi)} \Big|\frac{\sin \psi}{1 - \cos \psi}\Big|  d\psi \nonumber \\
 & + &  \int_{2\pi - \delta}^{2\pi} \abs{\Wc_{\tdvarphi}(\psi)} \Big|\frac{\sin \psi}{1 - \cos \psi}\Big| d\psi. \nonumber 
\end{eqnarray}

Provided that $0 < \delta < \delta_2$, the tail integrals are all at most $O(\delta^{1/3})$ as before. We bound the first integral by two positive integrals, to obtain
\begin{eqnarray*}
\Big|\int_\delta^{2\pi- \delta} \lb \Wc_\varphi - \Wc_{\tdvarphi} \rb \frac{\sin \psi}{1 - \cos \psi}d \psi \Big|
& \leq & 2 \sup \,\abs{\Wc_{\varphi}(\psi) - \Wc_{\tdvarphi}(\psi)}  \int_\delta^\pi \frac{\sin \psi}{\lb 1 - \cos \psi \rb} d\psi \\ 
& \leq &  2 \sup \,\abs{\Wc_\varphi(\psi) - \Wc_{\tdvarphi}(\psi)}  \big[ \log\lb 1 - \cos \psi \rb \big]_\delta^\pi \\
& \leq & 6 a(\delta) \lb \log 2 - \log (1 - \cos \delta) \rb
\end{eqnarray*}
provided $\abs{\varphi - \tdvarphi} < \delta$. Finally, since 
$$
a(\delta) \lb \log 2 - \log \big(1 - \cos \delta\big) \rb \to 0
$$ as $\delta \rightarrow 0$ we finish the proof.
\end{proof}

Let $\Phi=\{\varphi_r \,\,:\,\, r\geq 0\}$ be the sequence described in Section \ref{sec_prelim} and let ${\bf I}=\{I_{r}\}_{r=0}^{\infty}$ be the sequence defined as $I_r :=  I_{\varphi_r}$. 
Since the function $I_\varphi$ is continuous on the interval $[0,2\pi]$ there exists a value $\varphi^*$ which determines the maximum value of  $I_\varphi$, which we denote by $I^*$. Since $\Phi$ is dense on the unit circle it follows that  
\begin{equation}\label{eqqmax}
I^{*} := \sup \lc  I_r : r \in \NN \rc
\end{equation}
and its distribution is determined via the infinite sequence $I_r$. Let the sequence of random variables ${\bf T}_{N}=\{T_{N,r}\}_{r=0}^{\infty}$. We now derive one more Lemma for use later on.
\begin{lemma}
\label{lem_Dctsmap}
Let $Y$ be a function in $D[0,2\pi]$. Then $Y$ is Lebesgue measurable, and its integral exists,
\begin{equation}
\int_0^{2\pi} Y(s) ds < \infty.
\end{equation}
Furthermore, let $Y_n$ be a sequence of functions in $D[0,2\pi]$ such that 
$Y_n \rightarrow Y$ in $D$ (i.e. with respect to the Skorohod topology) then
\begin{equation}
\int_0^{2\pi} Y_n(s) ds \rightarrow \int_0^{2\pi} Y(s)ds.
\end{equation}
\end{lemma}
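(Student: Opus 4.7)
The plan is to prove the two claims in order, with the second reducing to dominated convergence once the standard consequences of Skorohod convergence are extracted.

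\textbf{Measurability and integrability of $Y$.} I would first show that every $Y \in D[0,2\pi]$ is bounded: if $|Y(t_n)| \to \infty$ along some sequence $t_n$, extracting by Bolzano--Weierstrass a monotone subsequence $t_{n_k} \to t^*$ from one side contradicts the existence of the corresponding finite one-sided limit of $Y$ at $t^*$. For Borel measurability, I would approximate $Y$ pointwise by the step functions $Y_n(t) := Y\bigl(\lceil nt\rceil/n\bigr)$ (with the obvious adjustment at the right endpoint); right-continuity of $Y$ gives $Y_n(t) \to Y(t)$ at every $t$, and each $Y_n$ is a finite-valued simple function, so $Y$ is Borel. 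Integrability of $Y$ on $[0,2\pi]$ then follows from $\int_0^{2\pi} |Y(s)|\,ds \leq 2\pi\,\|Y\|_\infty < \infty$.

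\textbf{Continuity of $Y \mapsto \int Y$ on $D$.} Skorohod convergence $Y_n \to Y$ provides strictly increasing continuous bijections $\lambda_n : [0,2\pi] \to [0,2\pi]$ with $\|\lambda_n - \mathrm{id}\|_\infty \to 0$ and $\|Y_n \circ \lambda_n - Y\|_\infty \to 0$. From this I would extract two consequences. First, uniform boundedness: $\|Y_n\|_\infty = \|Y_n \circ \lambda_n\|_\infty \leq \|Y_n \circ \lambda_n - Y\|_\infty + \|Y\|_\infty$ remains bounded as $n \to \infty$. Second, pointwise convergence at every continuity point $t$ of $Y$: setting $s_n := \lambda_n^{-1}(t) \to t$, we have $Y_n(t) = (Y_n \circ \lambda_n)(s_n) = Y(s_n) + o(1) \to Y(t)$ by continuity of $Y$ at $t$ together with the uniform estimate $\|Y_n \circ \lambda_n - Y\|_\infty \to 0$.

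Since a càdlàg function on a compact interval has at most countably many discontinuities (for each $\epsilon > 0$ only finitely many $\epsilon$-jumps are possible, by essentially the same Bolzano--Weierstrass argument used above), these two ingredients provide Lebesgue-a.e. pointwise convergence together with a uniform dominating constant, and the dominated convergence theorem yields $\int_0^{2\pi} Y_n(s)\,ds \to \int_0^{2\pi} Y(s)\,ds$.

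The main point of friction is the extraction of pointwise a.e. convergence from Skorohod convergence via the time-change $\lambda_n$ and its inverse; once that is in place, the rest of the argument collapses into textbook facts about càdlàg functions combined with a routine application of dominated convergence.
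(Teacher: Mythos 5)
Your proposal is correct and follows essentially the same route as the paper: both arguments reduce to the dominated convergence theorem, using a uniform bound on $\|Y_n\|_\infty$ as the dominating constant and pointwise convergence at the (co-countable set of) continuity points of $Y$. The only difference is that the paper cites Billingsley (Lemma 1, p.~110, and the surrounding discussion) for the boundedness/measurability of c\`adl\`ag functions and for the pointwise-convergence consequence of Skorohod convergence, whereas you prove these facts directly via the time changes $\lambda_n$; this is a matter of self-containedness, not of substance.
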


\begin{proof}
The existence of the integral follows from Lemma 1, page 110 of \cite{Billingsley68} and the subsequent discussion
which shows that functions in $D$ on a closed bounded interval are both Lebesgue measurable and bounded.
The former follows from the fact that they can be uniformly approximated by simple functions, a direct
consequence of Lemma 1 and the latter also. 

Convergence follows from the Lebesgue dominated convergence theorem.
This holds since the sequence $Y_n$ is uniformly bounded, by a constant so the sequence is dominated.
Second $Y$ is continuous a.e. with pointwise convergence holding at points of continuity, as a consequence
of convergence in $D$ see \cite{Billingsley68}.
\end{proof}

We now proceed to prove the following Theorem.

\begin{theorem}\label{thm_weakconvergence}
The sequence ${\bf T}_{N}$ converges in distribution to the sequence ${\bf I}$
\begin{equation}\label{eqn_thmfin}
{\bf T}_N \Rightarrow {\bf I}
\end{equation}
as $N\to\infty$.
\end{theorem}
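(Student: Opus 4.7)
The plan is to reduce weak convergence of $\mathbf{T}_N$ in the sequence space $\R^\infty$ (with its product topology) to weak convergence of all finite-dimensional projections, then invoke Theorem \ref{thm_Kvariate} and identify the limit with the finite-dimensional distributions of $\mathbf{I}$. Since weak convergence in $\R^\infty$ is equivalent to convergence of every finite-dimensional marginal, it suffices to show that for each $s\in\NN$ and every finite subset $\{\varphi_{r_1},\ldots,\varphi_{r_s}\}\subset\Phi$,
$$
(T_N(\varphi_{r_1}),\ldots,T_N(\varphi_{r_s})) \Rightarrow (I_{\varphi_{r_1}},\ldots,I_{\varphi_{r_s}}).
$$
Theorem \ref{thm_Kvariate} already tells us that the left-hand side converges weakly to $N(0,\Sigma_s)$ with $\Sigma_{k,\ell}=K(|\varphi_{r_k}-\varphi_{r_\ell}|)$. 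The whole problem thus reduces to identifying the right-hand side with this same Gaussian vector.

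The second step will be to verify that $(I_{\varphi_{r_1}},\ldots,I_{\varphi_{r_s}})$ is centered and jointly Gaussian. Each $I_\varphi$ is, by Lemma \ref{lemma_ctsI}, a well-defined integral against the Brownian bridge $W^o$. I would truncate the weight $\sin\psi/(1-\cos\psi)$ away from its singularities at $0$ and $2\pi$, approximate the truncated integrals by Riemann sums (which, being finite linear combinations of $W^o$-values, are jointly Gaussian across different $\varphi$'s), and then let the truncation shrink. The uniform H\"older-type control on $W^o_\varphi$ near the singularities that is proved inside Lemma \ref{lemma_ctsI} guarantees $L^2$ convergence of the approximants, so the limit remains centered and jointly Gaussian; the mean vanishes by Fubini and $E[W^o(\cdot)]\equiv0$.

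The heart of the argument is then the covariance identity
$$
E\lrs{I_\varphi\, I_{\tdvarphi}} \;=\; K(|\varphi-\tdvarphi|).
$$
By Fubini this equals a double integral of $C_{\varphi,\tdvarphi}(\theta,\eta):=E[W^o_\varphi(\theta)W^o_{\tdvarphi}(\eta)]$ against the product weight. Using the bridge covariance $R(s,t)=\min(s,t)-st/(2\pi)$, $C_{\varphi,\tdvarphi}$ expands into a four-term combination of $R$ evaluated at arguments shifted modulo $2\pi$, and a case split on which interval the wrap-arounds fall into collapses the double integral into $K(|\varphi-\tdvarphi|)$ after elementary but tedious integration. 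A cleaner route I would prefer is Fourier: from $\log\big(2(1-\cos\psi)\big)=-2\sum_{k\geq1}k^{-1}\cos(k\psi)$ one immediately obtains $K(\theta)=2\sum_{k\geq1}k^{-2}\cos(k\theta)$, and substituting the analogous (formal) expansion $\sin\psi/(1-\cos\psi)=2\sum_{k\geq1}\sin(k\psi)$ into the definition of $I_\varphi$, together with the distributional identity $W^o_\varphi\stackrel{d}{=}W^o$ verified by a direct check on the covariance kernel, reproduces the same cosine series for $E[I_\varphi I_{\tdvarphi}]$. The hard part will be justifying the termwise exchange of sum and integral, since the Fourier series for the weight is only conditionally convergent; the endpoint vanishing $W^o_\varphi(0)=W^o_\varphi(2\pi)=0$ provides the cancellation needed to carry this out rigorously.

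Once this identity is established, the joint distribution of $(I_{\varphi_{r_1}},\ldots,I_{\varphi_{r_s}})$ is centered Gaussian with covariance $\Sigma_s$, matching the limit supplied by Theorem \ref{thm_Kvariate}. Agreement of all finite-dimensional distributions of $\mathbf{T}_N$ and $\mathbf{I}$ in the limit then yields (\ref{eqn_thmfin}).
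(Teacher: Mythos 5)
Your proposal is correct in outline but takes a genuinely different route from the paper. The paper does not use Theorem \ref{thm_Kvariate} at all in proving Theorem \ref{thm_weakconvergence}: it works at the process level, writing $T_N(\varphi_r)$ as a functional of the empirical process $W_{N,\varphi}$, invoking $W_{N,0}\Rightarrow \Wc$ in $D[0,2\pi]$, pushing this through the truncated integration-by-parts maps $J_{\epsilon,r}$ of (\ref{eqn_Imap}) via the continuous mapping theorem (Lemma \ref{lem_Dctsmap} supplying the needed measurability and continuity), controlling the near-singularity remainder ${\bf Z}_{N,\epsilon}$ uniformly in $N$ by a Chebyshev/union-bound estimate, and assembling everything with Billingsley's Theorem 4.2; the identification of the limit with ${\bf I}$ is then automatic and no covariance computation for $I_\varphi$ is ever made. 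You instead reduce to finite-dimensional convergence in $\R^\infty$ (legitimate, since weak convergence there is equivalent to convergence of all finite-dimensional laws), quote Theorem \ref{thm_Kvariate} for the $T_N$ side, and take on the burden of proving that $(I_{\varphi_{r_1}},\dots,I_{\varphi_{r_s}})$ is centered Gaussian with covariance $\Sigma_s$. That burden is real but manageable; a cleaner derivation than either of your two sketches is to integrate by parts (the boundary terms vanish by the L\'evy-modulus argument already inside Lemma \ref{lemma_ctsI}), giving $I_\varphi=-\int_0^{2\pi}\log\big(2(1-\cos\psi)\big)\,d\Wc_\varphi(\psi)$ as a Wiener-type integral, from which joint Gaussianity and the covariance follow at once because $\int_0^{2\pi}\log\big(2(1-\cos\psi)\big)\,d\psi=0$ kills the bridge correction. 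Your route avoids the Skorohod-space machinery and the $\epsilon$-truncation scheme entirely; the paper's route avoids any explicit law identification for ${\bf I}$ and carries process-level information, but both yield (\ref{eqn_thmfin}).

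One caveat you must handle explicitly: normalization. With the paper's literal bridge $\Wc(\psi)=B(\psi)-\tfrac{\psi}{2\pi}B(2\pi)$, whose covariance is $s\wedge t-\tfrac{st}{2\pi}$, the computation above gives $\mathbb{E}[I_\varphi I_{\tdvarphi}]=2\pi\,K(|\varphi-\tdvarphi|)$ and $\mathbb{E}[I_\varphi^2]=2\pi\sigma^2$, which would \emph{not} match the limit law in Theorem \ref{thm_Kvariate}. The identity $\mathbb{E}[I_\varphi I_{\tdvarphi}]=K(|\varphi-\tdvarphi|)$ that your argument needs holds when $\Wc$ is normalized as the actual weak limit of the uniform empirical process on $[0,2\pi]$, i.e.\ with covariance $\tfrac{s\wedge t}{2\pi}-\tfrac{st}{(2\pi)^2}$; the paper itself glosses over this factor when it asserts $W_{N,0}\Rightarrow\Wc$. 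Since your proof matches laws by comparing covariances, this constant is not cosmetic for you: fix the bridge normalization at the outset, and the finite-dimensional identification, and hence the theorem, goes through.
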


\begin{proof}
In order to do so we use Theorem 4.2 of \cite{Billingsley68}. Suppose that there is a metric space ${\mathcal S}$ with metric $\rho_0$ and sequences  
${\bf T}_{N, \epsilon}$, ${\bf I}_\epsilon$ and ${\bf T}_N$ all lying in ${\mathcal S}$ such that the following conditions hold,
\begin{eqnarray}\label{eqn_weakcond} 
 {\bf T}_{N, \epsilon} & \Rightarrow & {\bf I}_\epsilon \\
 {\bf I}_\epsilon & \Rightarrow & {\bf I} \nonumber
\end{eqnarray}
together with the further condition that given arbitrary $\eta > 0$,
\begin{equation}\label{eqn_probmetric}
\lim_{\epsilon \rightarrow 0} \limsup_{N \rightarrow \infty} 
\prob{ \rho_0({\bf T}_{N, \epsilon}, {\bf T}_N) \geq \eta } = 0.
\end{equation}

Then it holds that ${\bf T}_N \Rightarrow {\bf I}$. First, we define ${\bf I}_\epsilon$ using a realization of the Brownian bridge as follows,
$$
I_{r,\epsilon} := \big[ \Wc_{\varphi_r}(\psi) \log 2(1- \cos \psi) \big]^{2\pi - \epsilon}_{\epsilon}
- \int_\epsilon^{2\pi-\epsilon} \Wc_{\varphi_r}(\psi)  \frac{\sin \psi}{1 - \cos \psi} d \psi.
$$
The definition of the other sequence is more involved and so we defer it for a moment. We have shown that the limit integrals exist a.s. and so we only need to show that the 
first term converge to 0. Since $\log \big(2 \lb 1 - \cos \psi \rb\big) = O(\log \epsilon)$ 
when $\epsilon$ is small and in a neighbourhood of 0 and $2\pi$ we may invoke the 
Levy modulus of continuity, wrapped around at $2\pi$ to obtain that this term is
$$
O( \log \epsilon a(\epsilon)) \rightarrow 0.
$$
Hence, coordinate convergence of the integrals holds so that
$$
\Big| I_{r,\epsilon} + \int_\epsilon^{2\pi-\epsilon} \Wc_{\varphi_r}(\psi)  \frac{\sin \psi}{1 - \cos \psi} d \psi \Big| \Rightarrow 0
$$
and it follows that ${\bf I}_\epsilon \Rightarrow {\bf I}$ as $\epsilon\to 0$, since the sign of the integral is immaterial. We have thus demonstrated the second condition of (\ref{eqn_weakcond}). Next, we proceed by rewriting $T_N(\varphi_r)$ in terms of the empirical distribution function $F_N:[0,2\pi]\to [0,1]$ determined by
$$
F_N(\psi) := \frac{\# \lc \theta_q : 0\leq \theta_q \leq \psi \rc}{s_N}.
$$
By definition of $F_{N}(\psi)$ and the Lebesgue--Stieljes integral we see that
\begin{eqnarray*}
T_N(\varphi_r) & = & \sqrt{s_N} \int_0^{2\pi} \log \big(2(1 - \cos(\varphi_r-\psi))\big) dF_N(\psi) \\
               & = & \sqrt{s_N} \int_0^{2\pi} \log \big(2(1 - \cos \tp)\big) dF_{N,\varphi_r}(\tp) 
\end{eqnarray*}
where the change of variables, $\tp = \psi - \varphi$ has been made.  For $\psi \in [0,2\pi)$ we define $F_{N,\varphi}(\psi)$ as the ``cycled'' empirical distribution function of $F_{N}$ by

\begin{equation*}
F_{N,\varphi}(\psi) :=
\begin{cases}
\frac{\#\lc \varphi \leq \theta_q < \varphi+\psi \rc }{s_N} & \text{if } \varphi \leq \psi < 2\pi-\varphi, \\ 
F_{N,\varphi}(2\pi) + \frac{\#\lc 0 \leq \theta_q \leq \psi-2\pi+\varphi \rc}{s_N}& \text{if } 2\pi-\varphi \leq \psi < 2\pi.
\end{cases}
\end{equation*}

To define the sequence $T_{N,\epsilon}(\varphi_r)$ we split the integral into two parts 
as in $\int^{2\pi - \epsilon}_\epsilon$ and $\int_0^\epsilon + \int_{2\pi - \epsilon}^{2\pi}$
and then use  integration by parts on the first part, which yields the expression,
\begin{eqnarray}
 T_{N,\epsilon}(\varphi_r) & := & 
\sqrt{s_N} \lb \ls \lb F_{N,\varphi_r}(\psi) - \frac{\psi}{2\pi} \rb \log 2(1 - \cos \psi) \rs_\epsilon^{2\pi-\epsilon} \rb \\
\label{eqn_TNepsdefn}
 & - & \sqrt{s_N}  \int_\epsilon^{2\pi - \epsilon} \lb F_{N,\varphi_r}(\psi) - \frac{\psi}{2\pi} \rb\frac{\sin \psi}{(1 - \cos \psi)} d\psi \nonumber .
\end{eqnarray}
For later use we make the definition,
$$
W_{N,\varphi} := \sqrt{s_N} \lb  F_{N,\varphi}(\psi) - \frac{\psi}{2\pi} \rb.
$$
This is not quite equal to the original sum, since
$$
\int_0^{2\pi}  \log \big(2 \lb 1 - \cos \psi \rb\big) d \psi = 0
$$
so that the $\psi$ terms do not give 0 but rather cancel with $\mu_\epsilon$ to be defined in  a moment. The remainder we express it as a sum, noting that we must include the mean, which
is by symmetry,
\begin{equation}
\mu_\epsilon := \frac{2}{2\pi} \int_0^\epsilon \log \big(2\lb 1 - \cos \psi \rb\big) d \psi = \frac{2}{\pi} \lb \epsilon \log \epsilon - \epsilon +o(\epsilon) \rb.
\end{equation}
Define $S_\epsilon(\varphi) := \lc \theta_q: \theta_q \in [\varphi-\epsilon,\varphi+\epsilon] \rc$ and hence the sum can be written as
\begin{equation}
Z_{N,\epsilon}(\varphi_r) := \frac{1}{\sqrt{s_N}}
\sum_{\theta_q \in S_\epsilon(\varphi_r)}
 \log \big( 2(1 - \cos(\varphi_r-\theta_q))\big) - \frac{1}{\sqrt{s_N}} \mu_\epsilon.
\label{eqn_Zvarepsdef}
\end{equation}
Denote the corresponding sequence as ${\bf Z}_{N,\epsilon}$. Taking expectations we thus find that
$$
\expect{Z_{N,\epsilon}(\varphi_r)} = \frac{1}{\sqrt{s_N}} \int_{-\epsilon}^\epsilon \log \big( 2 (1-\cos\psi) \big) d\psi - \frac{1}{\sqrt{s_N}} \mu_\epsilon = 0
$$
is a sequence of random variables with 0 mean. We finally write,
\begin{equation}
{\bf T}_N = {\bf T}_{N,\epsilon} + {\bf Z}_{N,\epsilon}.
\label{eqn_seqdiff}
\end{equation}
We now proceed to demonstrate the first condition of (\ref{eqn_weakcond}), namely that, ${\bf T}_{N, \epsilon} \Rightarrow  {\bf I}_\epsilon$. The random variable $T_{N, \epsilon}(\varphi_r)$ is a functional of an empirical distribution and therefore of a process
lying in $D[0,2\pi]$. Define the random sequence $J_\epsilon$ defined for $f \in D[0,2\pi]$ and $f(0) = f(2\pi) = 0$ with the component term,
\begin{equation}
J_{\epsilon,r}(f) = \int_\epsilon^{2\pi - \epsilon} f_{\varphi_r}(\psi) 
  \frac{\sin \psi}{\lb 1 - \cos \psi \rb} d \psi -
\Big( f_{\varphi_r}(\psi) \log \big(2(1 - \cos \psi)\big) \Big)_\epsilon^{2\pi-\epsilon}.
\label{eqn_Imap}
\end{equation} 
It is well known that $W_{N,0} \Rightarrow \Wc$ in $D$, which implies that $W_{N,\varphi_r} \Rightarrow \Wc_{\varphi_r}$ as $N\to\infty$ for all $r$. The result follows on showing that $J_\epsilon$  defines a measurable mapping $J_\epsilon:D[0,2\pi] \rightarrow \re^\infty$ in $D[0,2\pi]$. Since 
$$
J_{\epsilon,r}(W_N) = T_{N,\epsilon}(\varphi_r),
$$ 
we may therefore apply Theorem 5.1, Corollary 1 of \cite{Billingsley68} which states that if $W_N \Rightarrow \Wc$ then 
$J_\epsilon(W_N) \Rightarrow J_\epsilon(\Wc)$, (and hence ${\bf T}_{N,\epsilon} \Rightarrow {\bf I}_\epsilon$) provided that we verify 
\begin{equation}
\prob{ \Wc \in D_{J_\epsilon}} = 0.
\label{eqn_probdiscontinuity}
\end{equation}
To deal with the measurability question we first observe that the coordinate maps are measurable and since $\sin \psi/(1-\cos \psi)$ is continuous in $[\epsilon, 2\pi - \epsilon]$, it follows by Lemma \ref{lem_Dctsmap} that $J_{\epsilon,r}$ is measurable for each $r$ and hence so is the sequence mapping $J_\epsilon$. Again by Lemma \ref{lem_Dctsmap} the sequence of integrals convergences with respect to $\rho_0$. This leaves
only the final term. However, since the limit $\Wc$ is almost surely continuous it follows that
\begin{eqnarray*}
f_{\varphi_r}(\epsilon) & \rightarrow & \Wc_{\varphi_r}(\epsilon) \\
f_{\varphi_r}(2\pi - \epsilon) & \rightarrow & \Wc_{\varphi_r}(2\pi-\epsilon) 
\end{eqnarray*}
for each $r$ if $f \rightarrow \Wc$ in $D[0,2\pi]$. Thus the corresponding sequence converges 
with respect to $\rho_0$ also and so (\ref{eqn_probdiscontinuity}) holds. The proof of the first condition is concluded. 

It remains to demonstrate (\ref{eqn_probmetric}). Here we use the union bound and Chebyshev's inequality. This is because the various $Z_{N,\epsilon}(\varphi_r)$ in the sequences are dependent, as
they are determined via the same $\theta_q$. Nevertheless they are of course themselves the sum of i.i.d. 
random variables. In determining the variance,
we may work with $\varphi_r=0$ without loss of generality. The variance
of one of the i.i.d. summands in (\ref{eqn_Zvarepsdef}) is determined as
\begin{equation}
\sigma^2_\epsilon := \frac{2}{2\pi}  \int_0^\epsilon  \log^2 \big(2(1 - \cos\psi)\big)  d\psi - \mu_\epsilon^2 < \infty.
\end{equation}

Since for small $\epsilon > 0$ we have $\log \big(2(1 - \cos\psi)\big) = O(2\log \psi) + o(\psi)$ the integral
is $\sigma_\epsilon = O(\epsilon \log^2 \epsilon)$ as the integral of $\log^2 x$ is $x \log^2 x - 2x \log x + 2x$.
It follows that $\sigma^2_\epsilon \rightarrow 0$ as $\epsilon \rightarrow 0$, which is the variance of
the entire sum by independence and as it has been scaled.  

Now fix $\eta >0$. By definition of $\rho_0$ and from (\ref{eqn_seqdiff}) we obtain that,
$$
\rho_0({\bf T}_{N,\epsilon} , {\bf T}_N ) = \sum_{r=0}^\infty \frac{\abs{Z_{N,\epsilon}(\varphi_r)}}{1 + 
\abs{Z_{N,\epsilon}(\varphi_r)}} 2^{-r}.
$$
Let $R_\eta$ be such that $\sum_{r=R_\eta+1}^\infty 2^{-r} < \eta/2$. Now we apply the union bound to the
remaining $R_\eta+1$ summands to obtain that
\begin{eqnarray}
\prob{\sum_{r=0}^{R_\eta} \frac{\abs{ Z_{N,\epsilon}(\varphi_r)}}{1 + 
\abs{Z_{N,\epsilon}(\varphi_r)}} 2^{-r} \geq \eta/2 } &\leq &\sum_{r=0}^{R_\eta} \prob{\abs{Z_{N,\epsilon}(\varphi_r)} 2^{-r} \geq \frac{\eta}{2 \lb R_\eta +1 \rb}} \\
\label{eqn_chebunionbnd}
& \leq & \sum_{r=0}^{R_\eta} \sigma^2_\epsilon \frac{4 \lb R_\eta +1 \rb ^2}{\eta^2 2^{2r}} \nonumber \\
& \leq & \sigma^2_\epsilon \frac{16 \lb  R_\eta +1 \rb ^2}{3\eta^2}. \nonumber
\end{eqnarray}
Hence,
$$
\limsup_{N\to\infty} \prob{\rho_0({\bf T}_{N,\epsilon} , {\bf T}_N ) > \eta } \leq \frac{16(R_\eta + 1)^2 \sigma^2_\epsilon}{3\eta^2}
$$
and the RHS goes to 0 as $\epsilon$ to 0, for each $\eta > 0$. Hence we obtain (\ref{eqn_probmetric}) as 
required.  Therefore, we have verified all conditions and Theorem \ref{thm_weakconvergence} is proved.
\end{proof}

It is rather easy to see that $I^* > 0$ almost surely. For instance, as in the proof of Lemma \ref{lemma_ctsI} it can be shown
that,
$$
\int_0^{2\pi} \int_0^{2\pi} \Big\vert W^o_{\varphi}(\psi) \frac{\sin \psi}{1-\cos \psi} \Big\vert \,d\psi \,d\varphi < \infty.
$$
It then follows from Fubini's theorem that, 
$$
\int_0^{2\pi} I_\varphi \,d\varphi = 0
$$ 
since
$\int_0^{2\pi} W^o_\varphi(\psi) d\varphi = 0$. But $I_\varphi$ is almost surely continuous and hence
$I^{*}=0$ if and only if $I_\varphi = 0$ for every $\varphi$. It thus follows that $I^* > 0$ almost
surely, as required.

\begin{theorem}
\label{main_thm}
Given $\tau \geq 0$, 
$$
\liminf_{N \rightarrow \infty} \prob{ T_N^* > \tau } \geq \prob{I^* > \tau}
$$
with equality if $\tau$ is a continuity point for the random variable $I^*$. Moreover, 
\begin{equation}
\frac{1}{s_N} \log \max \Big\{ |P_N(z)|^2\,\,:\,\,|z|=1\Big\} \Rightarrow I^{*}.
\end{equation}
\end{theorem}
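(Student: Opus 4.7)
The plan is to deduce Theorem \ref{main_thm} from the sequence-space weak convergence ${\bf T}_N \Rightarrow {\bf I}$ of Theorem \ref{thm_weakconvergence}. The preliminary observation is that almost surely
$$T_N^* = \sup_{r \geq 0} T_{N,r},$$
because $|P_N(e^{i\psi})|^2$ is continuous on the circle, its maximum is attained at some $\varphi^*$ where $T_N$ is finite (and hence continuous in a neighbourhood of $\varphi^*$), and $\Phi$ is dense in $[0,2\pi]$, so the maximum is approximated from below along $\Phi$. For the liminf inequality I would exploit that the functional ${\bf x} \mapsto \sup_r x_r$ is lower semi-continuous on $\R^\infty$ in the product topology, being the pointwise supremum of the continuous coordinate maps. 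The level set $\{ {\bf x} : \sup_r x_r > \tau \}$ is therefore open, and the portmanteau theorem applied to Theorem \ref{thm_weakconvergence} gives
$$\liminf_N \prob{T_N^* > \tau} \geq \prob{I^* > \tau}$$
directly. An equivalent hands-on route is to fix $R$, apply the continuous mapping theorem to get $\max_{r \leq R} T_{N,r} \Rightarrow \max_{r \leq R} I_r$, apply portmanteau on the open half-line $(\tau,\infty)\subset\R$, and let $R \to \infty$ using the monotone convergence $\max_{r \leq R} I_r \uparrow I^*$ (which was also used at the end of Section 3 to conclude $I^* > 0$).

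The delicate claim, equivalent to the weak convergence $T_N^* \Rightarrow I^*$ asserted in the Moreover, is the matching upper bound $\limsup_N \prob{T_N^* > \tau} \leq \prob{I^* > \tau}$ at continuity points $\tau$ of $I^*$. This does not follow from portmanteau, because the supremum on $\R^\infty$ is only lower, not upper, semi-continuous. The strategy is to show that, in probability and uniformly in $N$, the excess $T_N^* - \max_{r \leq R} T_{N,r}$ vanishes as $R \to \infty$. Combined with the finite-dimensional convergence $\max_{r \leq R} T_{N,r} \Rightarrow \max_{r \leq R} I_r$ and the monotone increase $\max_{r \leq R} I_r \uparrow I^*$ — here the continuity of $\tau$ is used so that no mass piles up on the approximating level — this produces the desired upper bound and hence equality.

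The main obstacle is therefore a uniform-in-$N$ tightness estimate for $\psi \mapsto T_N(\psi)$. The natural tool is the splitting $T_N = T_{N,\epsilon} + Z_{N,\epsilon}$ from the proof of Theorem \ref{thm_weakconvergence}. The truncated piece $T_{N,\epsilon}(\varphi_r) = J_{\epsilon,r}(W_N)$ is a continuous functional of the empirical process $W_N$, which converges in $D[0,2\pi]$ to the (continuous) Brownian-bridge-based limit $\Wc$. A continuous-mapping argument applied to $f \mapsto \sup_\varphi J_{\epsilon,\varphi}(f)$ — continuous on the subset of $D[0,2\pi]$ consisting of continuous paths, where the limit concentrates — gives $\sup_r T_{N,\epsilon}(\varphi_r) \Rightarrow \sup_r I_{r,\epsilon}$ as $N \to \infty$, and the latter converges a.s. to $I^*$ as $\epsilon \to 0$ by Lemma \ref{lemma_ctsI}. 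The residual $Z_{N,\epsilon}$ has i.i.d.\ summands with variance $\sigma_\epsilon^2 = O(\epsilon \log^2 \epsilon) \to 0$; since $Z_{N,\epsilon}(\varphi_r)$ is very negative wherever roots cluster near $\varphi_r$ and is otherwise of the scale of the small mean correction $|\mu_\epsilon|/\sqrt{s_N}$, its supremum over $r$ is negligible in probability uniformly in $N$ as $\epsilon \to 0$. Assembling these two ingredients via a Slutsky-type argument, analogous to the application of Theorem 4.2 of Billingsley in the proof of Theorem \ref{thm_weakconvergence}, completes the argument.
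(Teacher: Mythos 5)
Your first half is exactly the argument the paper leaves implicit (the paper states Theorem \ref{main_thm} with no written proof): since $T_N^*=\sup_r T_{N,r}$ and $I^*=\sup_r I_r$, the coordinate supremum is lower semicontinuous on $\R^\infty$, the set $\{ {\bf x}:\sup_r x_r>\tau\}$ is open, and the portmanteau theorem applied to Theorem \ref{thm_weakconvergence} yields the liminf inequality; this part is correct and matches the intended route. You are also right --- and this is a point the paper itself never addresses --- that the equality at continuity points and the ``Moreover'' convergence $T_N^*\Rightarrow I^*$ do \emph{not} follow from the $\R^\infty$ convergence alone, because the supremum is not upper semicontinuous in the product topology; some uniform-in-$N$ control of $T_N^*-\max_{r\le R}T_{N,r}$ is indispensable.

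The gap is that this uniform control, the one genuinely new ingredient, is asserted rather than proved, and the heuristic you offer for it does not work. The Chebyshev/union-bound control of $Z_{N,\epsilon}$ in the proof of Theorem \ref{thm_weakconvergence} covers only finitely many coordinates $r\le R_\eta$ (which is all the metric $\rho_0$ sees); what your plan requires is $\lim_{\epsilon\to0}\limsup_{N}\prob{\sup_{r\in\NN} Z_{N,\epsilon}(\varphi_r)>\delta}=0$, a supremum over the dense set, i.e.\ effectively over all $\varphi$. Your justification --- that away from root clusters $Z_{N,\epsilon}$ is ``of the scale of the small mean correction $|\mu_\epsilon|/\sqrt{s_N}$'' --- inherits a misnormalization: with the multiplicities accounted for, $Z_{N,\epsilon}(\varphi)=\frac{1}{s_N}\sum_k n_k\log\big(2(1-\cos(\varphi-\theta_k))\big)\1_{\{|\varphi-\theta_k|\le\epsilon\}}-m_{N,\epsilon}$ with compensator $m_{N,\epsilon}=\mu_\epsilon\big(\sum_{k\le N} n_k\big)/s_N$, whose magnitude is of order $\big(\sum_k n_k/s_N\big)\,\epsilon|\log\epsilon|$ (for $n_k\equiv 1$ this is $\sqrt{N}\,\epsilon|\log\epsilon|$), which diverges with $N$ at fixed $\epsilon$. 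Consequently, at any $\varphi$ whose $\epsilon$-window carries less than its expected share of root mass, $Z_{N,\epsilon}(\varphi)$ is positive and of that large order times the relative deficit, so neither the trivial one-sided bound (``each summand is negative'') nor pointwise variance bounds give the needed uniform statement: one must prove a maximal inequality, e.g.\ by chaining or by exponential bounds on the root-mass deficit uniformly over the $O(1/\epsilon)$ windows. The companion steps --- continuity of $f\mapsto\sup_\varphi J_{\epsilon,\varphi}(f)$ at continuous paths (via $\sup_\varphi|J_{\epsilon,\varphi}(f)-J_{\epsilon,\varphi}(g)|\le C_\epsilon\,\sup_\psi|f(\psi)-g(\psi)|$ and the fact that Skorohod convergence to a continuous limit is uniform) and the a.s.\ convergence $\sup_\varphi I_{\varphi,\epsilon}\to I^*$ (uniformity in $\varphi$ of the Levy-modulus estimates of Lemma \ref{lemma_ctsI}) --- are correct but also need to be written out. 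As it stands, the proposal proves the liminf inequality but not the equality or the ``Moreover'' statement.
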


It therefore follows that,
$$
\max \Big\{ |P_N(z)|^2\,\,:\,\,|z|=1\Big\} \approx e^{\sqrt{n_1^{2}+\ldots+n_{N}^{2}}I^{*}} 
$$
for $N$ sufficiently large. A sample histogram for $I^*$ is shown in Figure \ref{fig_I*}.
\begin{figure}[!Ht]
  \begin{center}
    \includegraphics[width=12cm]{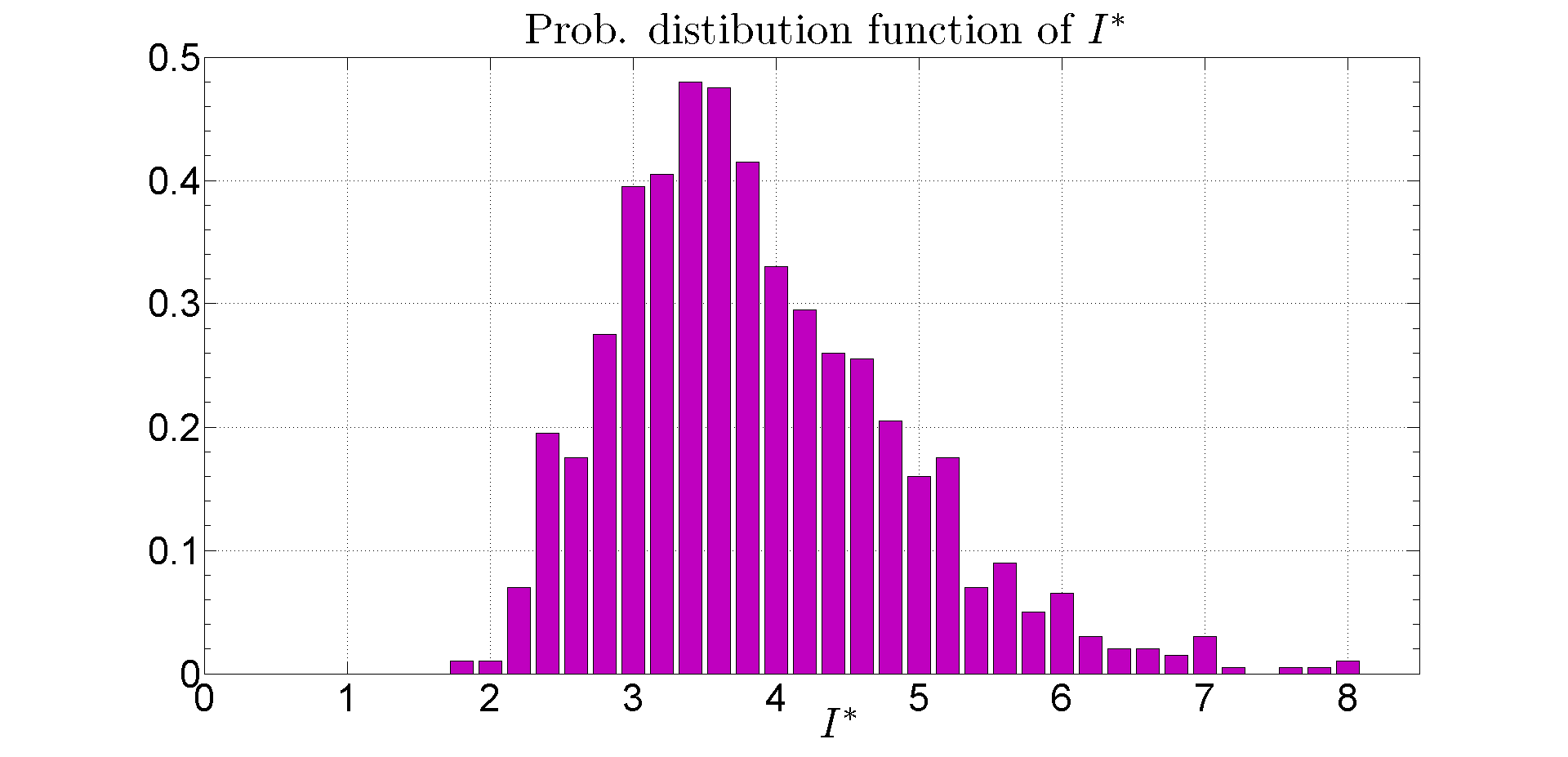}
    \caption{Histogram of the pdf of $I^{*}$.}
    \label{fig_I*}
  \end{center}
\end{figure}

\section{Numerical Results}\label{sec_numer}
In this Section, we present some simulations of our results. In Figure \ref{fig_randpoly} we show the logarithm squared magnitude for a random polynomial with $N=500$ and constant sequence $n_k=1$. Here the maximum value is $\approx 10^{60}$ and occurs near $\psi = 0.3$.
\begin{figure}[!Ht]
  \begin{center}
    \includegraphics[width=12cm]{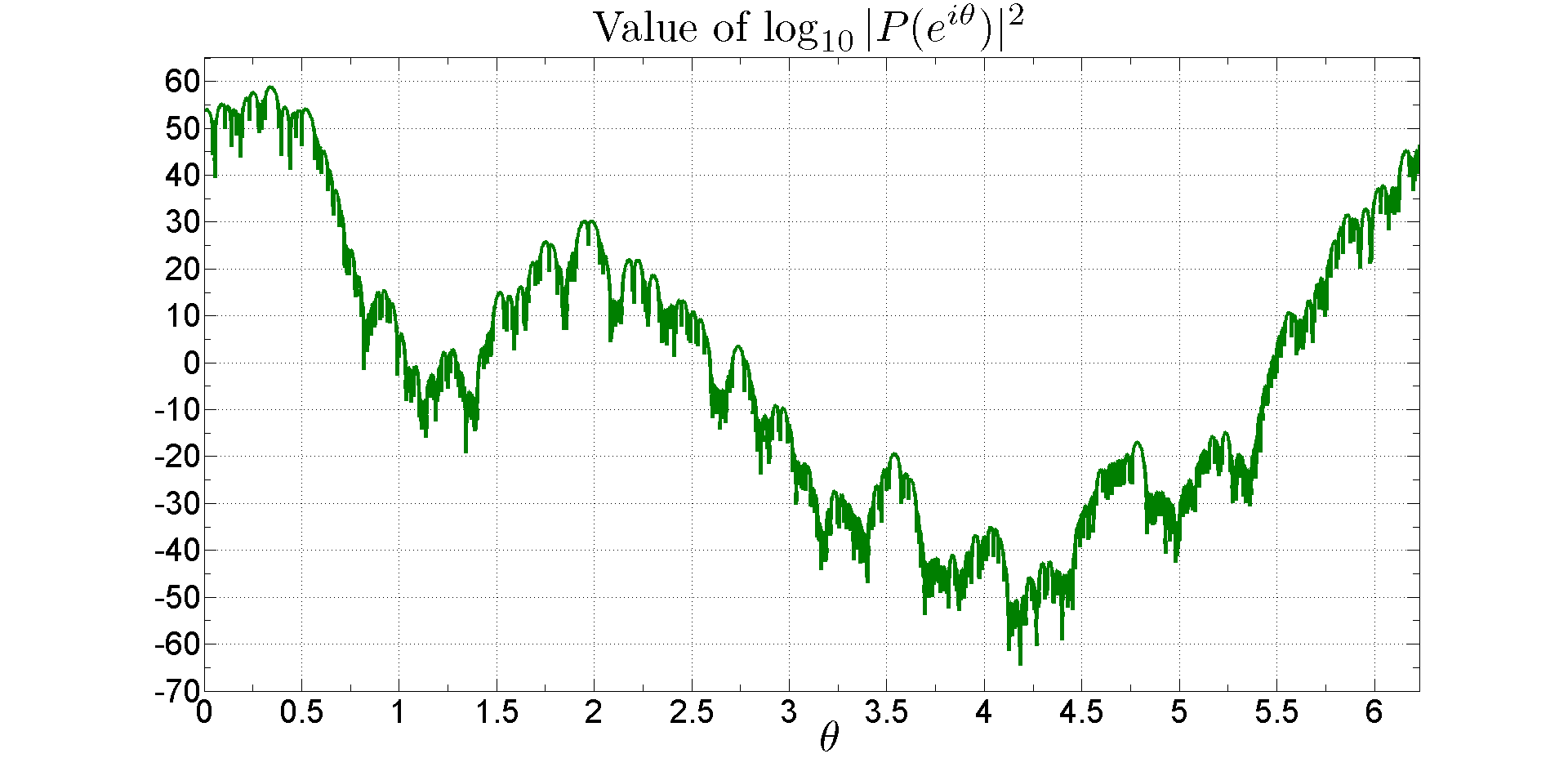}
    \caption{Log magnitude squared as a function of the phase ($N=500$ and $n_{k}=1$).}
    \label{fig_randpoly}
  \end{center}
\end{figure}
Our final plots show the logarithm of the maximum magnitude as a function of the degree for the sequences $n_{k}=1$ and the sequence $n_k=k$ with 100 realizations per degree.
The blue curves are $(n_1^2+\ldots+n_N^2)^{1/2}$ and $5(n_1^2+\ldots+n_N^2)^{1/2}$ respectively for both cases.
\begin{figure}[!Ht] 
  \begin{center}
    \includegraphics[width=12cm]{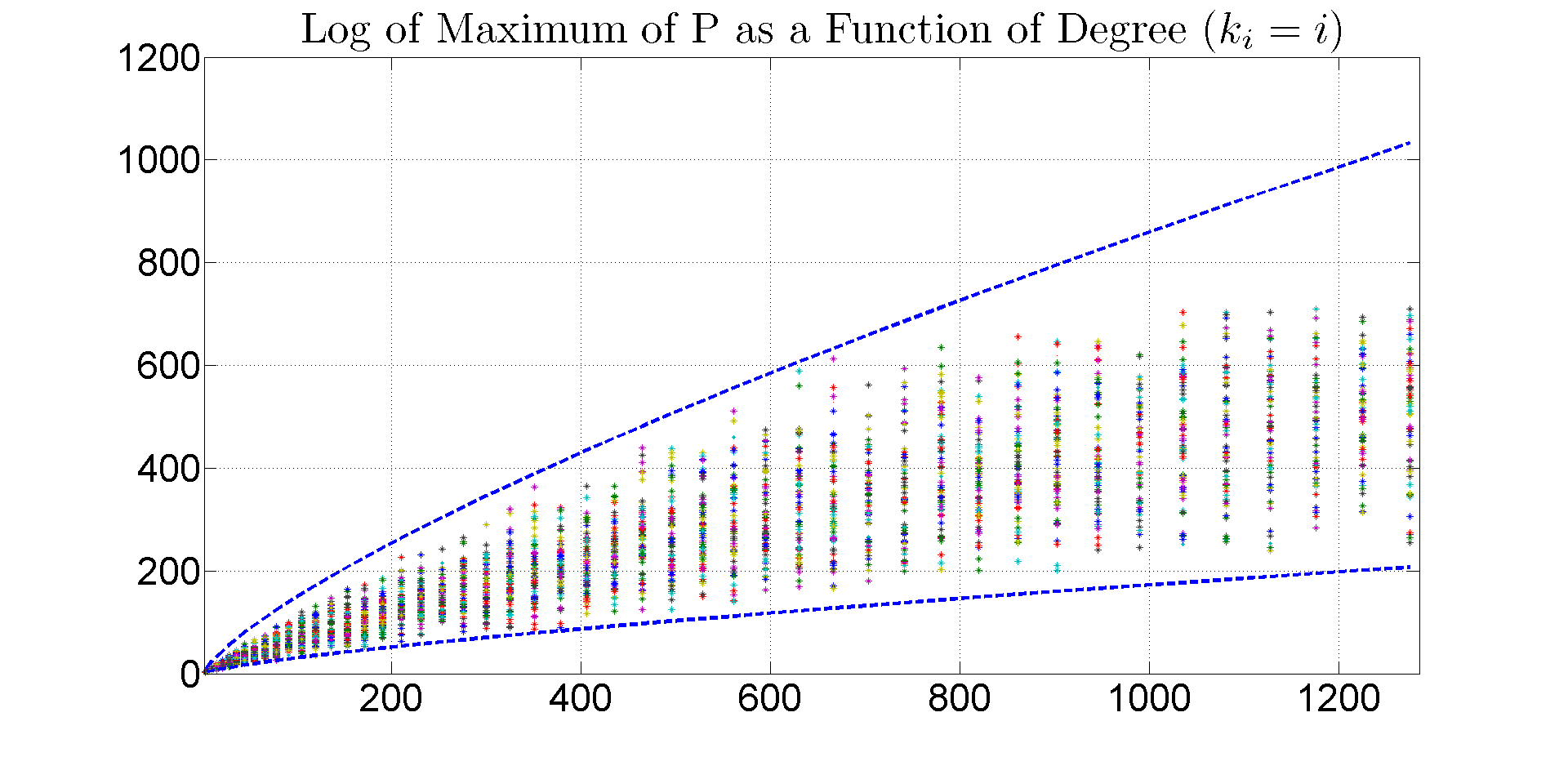}
    \includegraphics[width=12cm]{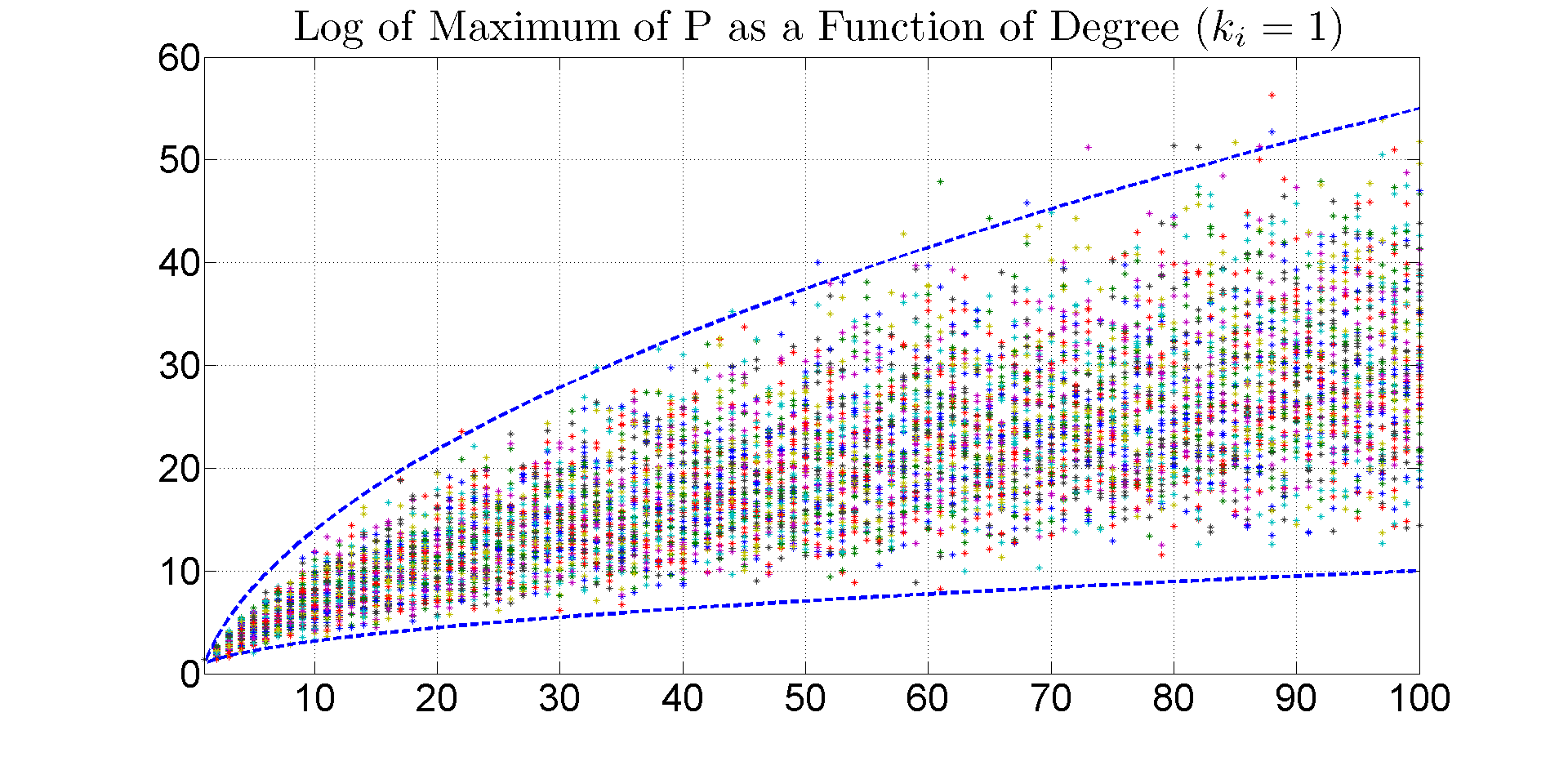}
    \caption{Logarithm of the maximum magnitude as a function of the degree for the sequences $n_{k}=k$ (top) and the sequence $n_k=1$ (bottom) and 100 realizations per time.
    The blue curves are $(n_1^2+\ldots+n_N^2)^{1/2}$ and $5(n_1^2+\ldots+n_N^2)^{1/2}$ for both cases.}
    \label{fig_degree}
  \end{center}
\end{figure}

\end{document}